\documentclass[12pt]{article}
\usepackage{amsmath,amsfonts,amssymb,amsthm}
\textwidth=6.5in \textheight=8.5in \topmargin=0in \oddsidemargin=0in

\newcommand{\R}{\mathbb{R}}

\newcommand{\Z}{\mathbb{Z}}
\newcommand{\Q}{\mathbb{Q}}

\newcommand{\ca}{\mathcal A}
\newcommand{\cb}{\sqrt[3]{2}}
\newcommand{\twosum}[2]{\sum_{\substack{#1\\#2}}}

\newtheorem{thm}{Theorem}[section]
\newtheorem{lem}[thm]{Lemma}
\begin{document}
\title{The largest prime factor of $X^3+2$}
\author{A.J. Irving\\
Centre de recherches math\'ematiques, Universit\'e de Montr\'eal}
\date{}

\maketitle

\begin{abstract}
Improving on a theorem of Heath-Brown, we show that if $X$ is sufficiently large  then a positive proportion of the values $\{n^3+2:n\in (X,2X]\}$ have a prime factor larger than $X^{1+10^{-52}}$.
\end{abstract}

\section{Introduction}

Given an irreducible polynomial $f$ over $\Z$, whose values have no fixed prime divisor, it is conjectured that there are infinitely many $n$ for which $f(n)$ is prime.  This problem appears to be extremely difficult when $\deg f\geq 2$ so it is natural to look for weaker statements which can be proven.  For example, rather than insisting that $f(n)$ is prime one can ask only that it has a large prime factor.  We therefore consider lower bounds for $P(x;f)$, the largest prime which divides 
$$\prod_{n\leq x}f(n).$$
The best result for polynomials of arbitrary degree is due to Tenenbaum \cite{tenenbaum} who showed that 
$$P(x;f)\gg x\exp((\log x)^{A})$$
for any $A<2-\log 4$.  However, for low degree polynomials one can do much better, gaining a power of $x$.  The first result of this kind is due to Hooley \cite{hooleylargequadratic} who showed that 
$$P(x;X^2+1)\gg x^{\frac{11}{10}},$$
which was subsequently improved by Deshouillers and Iwaniec \cite{deshiwaniec} to 
$$P(x;X^2+1)\gg x^{1.202}.$$
Hooley also considered the case of the cubic polynomial $X^3+2$.  In \cite{hooleylargecubic} he showed that, assuming certain bounds for short Kloosterman sums, we have 
$$P(x;X^3+2)\gg x^{1+\frac{1}{30}}.$$
An unconditional result of this form was established by Heath-Brown in \cite{rhbx32}, albeit with a considerably smaller exponent than $\frac{1}{30}$. Specifically,  let $\varpi=10^{-303}$ and suppose that $X$ is sufficiently large.  Heath-Brown proved that, for a positive proportion of the integers  $n\in (X,2X]$, the largest prime factor of $n^3+2$ exceeds $X^{1+\varpi}$.  In this work we will show that this conclusion holds with a larger value of $\varpi$.

\begin{thm}\label{mainthm}
Let $\varpi=10^{-52}$ and suppose that $X$ is sufficiently large.  Then, for at least a positive proportion of the integers $n\in (X,2X]$, $n^3+2$ has a prime factor in excess of $X^{1+\varpi}$.  In particular we have 
$$P(x;X^3+2)\gg x^{1+\varpi}.$$
\end{thm}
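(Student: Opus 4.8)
\emph{Step~I: reduction to a level of distribution.}
Fix $\varpi=10^{-52}$ and a small fixed constant $\eta>0$ (to be chosen below the level of distribution obtained in Step~III), and set $z=X^{1+\varpi}$, $Z=X^{1+\varpi+\eta}$. The plan is to study the weighted count of large prime factors
$$W=\sum_{n\in(X,2X]}\ \twosum{p\mid n^3+2}{z<p\le Z}1\ =\ \sum_{z<p\le Z}\#\{n\in(X,2X]:p\mid n^3+2\}.$$
Since $n^3+2<8X^3<Z^3$ once $X$ is large, each $n$ contributes at most two terms to the first sum, so the quantity $N$ of the theorem satisfies $N\ge\tfrac12W$; it is therefore enough to prove $W\gg X$. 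Writing $\rho(d)=\#\{\nu\bmod d:\nu^3+2\equiv0\pmod d\}$, the prime ideal theorem for $\Q(\cb)$ gives $\sum_{p\le t}\rho(p)\log p\sim t$, so partial summation identifies the main term of $W$:
$$X\sum_{z<p\le Z}\frac{\rho(p)}{p}\ \sim\ X\log\frac{1+\varpi+\eta}{1+\varpi}\ \asymp\ \eta X.$$
Hence the entire problem reduces to showing that the error $E:=W-X\sum_{z<p\le Z}\rho(p)/p$ is $o(X)$; equivalently, that the values $n^3+2$, $n\in(X,2X]$, are equidistributed in arithmetic progressions to moduli up to $Z=X^{1+\varpi+\eta}$ --- a little beyond the trivial level $X$. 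The number $\varpi=10^{-52}$ will be, up to the slack $\eta$, exactly the margin by which the estimate in Step~III exceeds that trivial level.

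\emph{Step~II: Fourier expansion and the Type~I/Type~II dichotomy.} Detecting the congruence $n\equiv\nu\pmod p$ by the additive characters $e(t)=e^{2\pi it}$ rewrites $E$ as a sum --- over $p$, over the roots $\nu=\nu(p)$ of $\nu^3\equiv-2\pmod p$, and over a nonzero dual variable $h$ --- of the products $\tfrac1p\bigl(\sum_{\nu(p)}e(-h\nu/p)\bigr)\bigl(\sum_{n\in(X,2X]}e(hn/p)\bigr)$; the first factor is a complete cubic exponential sum and the second is $O(\min(X,\|h/p\|^{-1}))$. The trivial bound for the cubic sum loses everything, so cancellation among the moduli must be exploited. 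I would insert a Vaughan (or Heath-Brown) identity for the indicator of the primes $p\in(z,Z]$; this presents $E$ as a bounded number of \emph{Type~I} sums $\sum_{d_1\sim D_1}a_{d_1}\sum_{d_2}(\cdots)$, with $D_1$ a small power of $X$ and well-behaved coefficients $a_{d_1}$, together with \emph{Type~II} sums $\sum_{d_1\sim D_1}\sum_{d_2\sim D_2}a_{d_1}b_{d_2}(\cdots)$ in which $D_1D_2\asymp Z$ and $D_1,D_2$ each exceed a fixed positive power of $X$. The structural point that matters below is that in every one of these the modulus appears as a product $d_1d_2$ of variables we control. (The arithmetic of $\Z[\cb]$ enters through the description of the roots $\nu$ in terms of the prime ideals lying above $p$; I suppress this.)

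\emph{Step~III: the two families of sums.} The Type~I sums are comparatively soft: for fixed $d_1$ the inner $d_2$-sum is an equidistribution statement for $n^3+2$ to a single modulus, handled by Poisson summation and bounds for complete cubic exponential sums, the short outer average over $d_1$ then being essentially harmless. The Type~II sums are the crux. The plan is to apply Cauchy--Schwarz in the variable $n$ --- a dispersion argument in the spirit of Linnik and Hooley --- reducing matters to counting $n\in(X,2X]$ with $n^3\equiv-2$ modulo $[d_1,d_1']$ for pairs $d_1,d_1'\sim D_1$; one then opens this count with characters and uses a reciprocity relation to invert the moduli, arriving at an average of Kloosterman sums $\sum_{q}S(m,\bar n;q)$ over $q$ in a range controlled by $D_1$. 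These are precisely the averages estimated by Deshouillers--Iwaniec through the Kuznetsov trace formula, and they yield a power saving provided $D_1,D_2$ lie in admissible ranges. It is the interplay of those ranges with the constraint $D_1D_2\asymp Z=X^{1+\varpi+\eta}$ that forces $\varpi$ to be positive but minuscule.

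\emph{Step~IV: squeezing out the improvement, and the main obstacle.} To beat Heath-Brown's $\varpi=10^{-303}$ one must widen the admissible Type~II ranges so that $Z$ may exceed $X$ by a larger amount. Because the modulus $q$ of the Kloosterman sums inherits a factorisation $q=d_1d_1'$ --- and can be split up further still when $D_1$ is itself a power of $X$ --- one can iterate a $q$-van der Corput step on the modulus, the Graham--Ringrose method familiar from Bombieri--Vinogradov theorems with smooth moduli, thereby extracting extra cancellation from the Kloosterman-sum averages and pushing the level of distribution past $X$ more efficiently. Combining this with an optimised choice of the Vaughan decomposition --- and, where it helps, with a Buchstab-type role-reversal exchanging Type~I information at one scale for Type~II information at another --- should produce the level $X^{1+\varpi+\eta}$ with $\varpi=10^{-52}$, hence $E=o(X)$, $W\gg X$ and the theorem. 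The one genuine obstacle is the Type~II estimate: the Mertens main term, the bound $N\ge\tfrac12W$, and the Type~I sums are all standard and robust, whereas the numerical value of $\varpi$ --- and essentially all the loss accumulated on the way to $10^{-52}$ --- is dictated by how far beyond the trivial level the bilinear Kloosterman input can be pushed and by the constants surrendered in the Cauchy--Schwarz and $q$-van der Corput steps. I expect the overwhelming bulk of the work to reside there.
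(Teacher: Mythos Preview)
Your proposal takes a route entirely different from the paper's, and it has a genuine gap at precisely the point you yourself flag as the main obstacle.

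The paper does \emph{not} attempt to push the level of distribution of $n^3+2$ past $X$. Instead it refines Heath-Brown's indirect method. Heath-Brown produces (Lemma~\ref{lem3}) a lower bound $S\ge(9.2\times10^{-8}+o(1))X$ for a sum $S=\sum_{n+\cb\in\ca}W(n+\cb)$ whose weights satisfy $W(n+\cb)\le\min(\Omega_\delta,[1/\delta])\,2^{\Omega_\delta}$, where $\Omega_\delta=\Omega_\delta(n^3+2)$ counts prime factors exceeding $X^\delta$ and $\delta=1/321$. The paper's contribution is to convert this weighted bound into a count of $n$ with $W>0$ more efficiently than dividing by $\max W$: one fixes a threshold $H$, keeps the terms with $\Omega_\delta\le H$, and bounds the discarded part by $\sum_{h>H}\min(h,[1/\delta])2^hT(h,\delta)$, where $T(h,\delta)=\#\{n\in(X,2X]:\Omega_\delta(n^3+2)\ge h\}$. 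Lemmas~\ref{firstbound} and~\ref{secondbound} give explicit upper bounds for $T(h,\delta)$ by constructing sets $\mathcal D$ of divisors $d\le X$ such that every $n$ counted by $T(h,\delta)$ has some $d\in\mathcal D$ dividing $n^3+2$; the point is that the resulting $c(h,\delta)$ beats $2^{-h}$ once $h\ge133$. Taking $H=132$ gives $\#\{W>0\}\ge(7.7\times10^{-50}+o(1))X$, and Lemma~\ref{lem2} then yields $\varpi=10^{-52}$. Throughout, only divisibility by moduli $d=O(X)$ is used --- the trivial level.

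By contrast, your plan requires $\sum_{z<p\le Z}\bigl(A_p-X\rho(p)/p\bigr)=o(X)$ with $Z=X^{1+\varpi+\eta}$, i.e.\ equidistribution of $n^3+2$ to prime moduli beyond $X$. This is exactly what Hooley proved \emph{conditionally} on a hypothesis about short Kloosterman sums, obtaining the far larger exponent $1/30$; no unconditional result of this type is known. Your Step~III sketch glosses over the essential difficulty: after Vaughan and Cauchy--Schwarz the relevant count is $\#\{n:n^3\equiv-2\pmod{[d_1,d_1']}\}$, and opening it with additive characters produces sums of $e(h\nu/q)$ over the cubic roots $\nu$ of $-2$ modulo $q$. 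These are not Kloosterman sums $S(m,\bar n;q)$, and no reciprocity or spectral input is known that turns them into the Deshouillers--Iwaniec averages you invoke; the Graham--Ringrose $q$-van der Corput method applies to character sums with smooth moduli, not to cubic-root sums over nearly-prime moduli. In short, the Type~II estimate on which everything rests is not merely ``the bulk of the work'' but an open problem --- and were it available, one would get an exponent far exceeding $10^{-52}$.
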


This will be proven by refining one part of Heath-Brown's argument.  Lemma \ref{lem3} gives a lower bound for a weighted sum, in which the weights depend exponentially on the number of prime factors of $n^3+2$ which exceed $X^{\frac{1}{321}}$,  and we require a lower bound for the number of nonzero terms.  Heath-Brown achieves this by taking the maximum of the weight, which is extremely large and therefore gives a very small value of $\varpi$.  Our improvement comes from observing that there are relatively few terms in the sum for which the weight is large and therefore a more efficient treatment can be given.  

The key ingredient in our work is  the estimation of the quantity $T(h,\delta)$, defined to be the number of $n\in (X,2X]$ for which $n^3+2$ has at least $h$ prime factors, when counted with multiplicities, which exceed $X^\delta$.  Our estimates will be of the form 
$$T(h,\delta)\leq X(c(h,\delta)+o(1)),$$
as $X\rightarrow \infty$, where $c(h,\delta)$ are constants which we must give explicitly.  We will prove two results of this form, Lemmas \ref{firstbound} and \ref{secondbound}.  The latter is sharper but requires a significant amount of computation to use.  We will apply our results with $\delta=\frac{1}{321}$ and $133\leq h\leq 963$.  Roughly speaking, the important fact is that for these values our bound on $c(h,\delta)$ is smaller than $2^{-h}$.  We will show that Lemma \ref{firstbound} yields such a bound when $h$ is greater than  a certain large multiple of $\log\frac{1}{\delta}$.  This means that the exponential dependence on $\delta$, described by Heath-Brown in \cite{rhbx32}, has been removed.  However, our interest is in the particular value $\delta=\frac{1}{321}$ so such asymptotic results should be treated with caution as the size of the implied constants can be more significant.  

We conjecture that the true asymptotic size of $T(h,\delta)$ should be of the above form but with smaller values of $c(h,\delta)$.  In fact, we believe that the proportion of $n$ counted by $T(h,\delta)$ should be asymptotic to the proportion of $n\in (X^3,2X^3]$ having at least $h$ prime factors in excess of $X^\delta$.  This latter quantity can be computed, see Billingsley \cite{billingsley} or Donnelly and Grimmett \cite{donnellygrimmett} for details.  When working with the values $n^3+2$ we are constrained by the available level of distribution information.  Specifically, we can only give an asymptotic formula for 
$$\#\{n\in (X,2X]:d|n^3+2\}$$
when $d\leq X$  and there is thus no hope of us establishing the asymptotic for $T(h,\delta)$.  Instead, we give upper bounds by finding sets $\mathcal D$ of $d\leq x$ such that any value of $n^3+2$ counted by $T(h,\delta)$ is divisible by at least one $d\in\mathcal D$.  Our methods could easily be extended to handle irreducible polynomials of arbitrary degree, or much more general sets having some level of distribution.  Naturally, the results for higher degree or lower level of distribution would be weaker.

It is probable that significant further improvement to the value of $\varpi$ should be possible.  
In particular, by optimising Heath-Brown's arguments one might hope for improvements to Lemma \ref{lem3}.  
By a more careful choice of the various parameters it should be possible to get a result for a larger value of $\delta$ and/or a better lower bound for the sum. In addition, the results of this paper might be improved, either by giving better combinatorial constructions or simply by giving better estimates for the integrals needed for  Lemmas \ref{firstbound} and \ref{secondbound}.

In a recent preprint \cite{dartyge}, Dartyge has extended Heath-Brown's methods to handle the quartic polynomial $X^4-X^2+1$.  She showed that for sufficiently large $x$ we have 
$$P(x;X^4-X^2+1)\geq x^{1+\varpi}$$
with $\varpi=10^{-26531}$.  It seems probable that our methods could be used to give a considerable improvement to this exponent.

\subsection*{Acknowledgements}

This work was completed whilst I was a CRM-ISM postdoctoral fellow at the Universit\'e de Montr\'eal.

\section{Heath-Brown's Approach}

As in \cite{rhbx32} we will work with the subset of algebraic integers in $\Q(\cb)$ given by 
$$\ca=\{n+\cb:X<n\leq 2X\}.$$
By the next lemma, the elements of $\ca$ are composed entirely of first degree prime ideals so Theorem \ref{mainthm} will follow if we can show that a positive proportion of them have a prime ideal factor $P$ with $N(P)\geq X^{1+\varpi}$.  If $I$ is an ideal we let 
$$\ca_I=\{\alpha\in \ca:I|\alpha\}$$
and 
$$\rho(I)=\#\{n\pmod {N(I)}:n\equiv\cb\pmod I\}.$$
Heath-Brown gives the following result which describes the function $\rho$.

\begin{lem}[{\cite[Lemma 1]{rhbx32}}]
If $n\equiv\cb\;\mod{I}$ is solvable with a rational integer $n$, then
$I$ is composed of first degree prime ideals only.  Moreover $I$
cannot be divisible by two distinct prime ideals of the same norm, nor
by $P_{2}^{2}$ or $P_{3}^{2}$, where $P_{2}$ and $P_{3}$ are the
primes above $2$ and $3$ respectively.
In all other cases the congruence is solvable, and we have
$\rho(I)=1$.   Moreover, if $I$ is an ideal for which $\rho(I)=1$,
then for any $m\in\Z$, we have $I|m$ if and only if $N(I)|m$.
\end{lem}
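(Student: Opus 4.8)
I would begin by recording the arithmetic of $K=\Q(\cb)$: its ring of integers is $\mathcal O=\Z[\cb]\cong\Z[x]/(x^3-2)$, of discriminant $-2^2\cdot3^3$, so that the only ramified rational primes are $2$ and $3$. Since $x^3-2\equiv x^3\pmod 2$ and $x^3-2\equiv(x+1)^3\pmod 3$, one has $(2)=P_2^3$ with $P_2=(\cb)$ and $(3)=P_3^3$ with $P_3=(\cb+1)$, each the unique prime above its residue characteristic; and, by Dedekind's theorem applied to $x^3-2$, for $p\notin\{2,3\}$ the primes above $p$ correspond to the irreducible factors of $x^3-2$ modulo $p$. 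The plan is to factor a given ideal $I$ into prime powers, use the Chinese Remainder Theorem --- the norms $N(Q^a)=N(Q)^a$, powers of the underlying rational prime and hence coprime over distinct rational primes --- to reduce the statement to the local question of whether $n\equiv\cb\pmod{Q^a}$ is solvable and, if so, how many residue classes $n$ modulo $N(Q^a)$ solve it, and then to reassemble.

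For necessity, suppose $Q\mid(n-\cb)$ for some $n\in\Z$ and a prime $Q$ above $p$. In $\mathcal O/Q$ the image of $\cb$ lies in the prime subfield, and since $\cb$ generates $\mathcal O$ over $\Z$ this forces $\mathcal O/Q=\mathbb F_p$, so $Q$ has degree one. Two distinct degree-one primes above a common $p\notin\{2,3\}$ correspond to distinct roots $r_1\ne r_2$ of $x^3-2$ modulo $p$, and $n\equiv\cb\pmod{Q_j}$ forces $n\equiv r_j\pmod p$, which cannot hold for both; over $p\in\{2,3\}$ there is only one prime. Hence $I$ is not divisible by two distinct primes of equal norm. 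Finally $v_{P_2}(\cb)=1$ while $v_{P_2}(n)\in\{0,3,6,\dots\}$, so $v_{P_2}(n-\cb)\le1$; writing $n-\cb=(n+1)-(\cb+1)$ and using $v_{P_3}(\cb+1)=1$, $v_{P_3}(n+1)\in\{0,3,6,\dots\}$ gives $v_{P_3}(n-\cb)\le1$. (Equivalently one may note $N(n-\cb)=|n^3-2|$ and check that $4\nmid n^3-2$ and $9\nmid n^3-2$ for every $n$.) In particular $\rho(I)=0$ as soon as one of the stated conditions fails.

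For sufficiency and the value $\rho(I)=1$, suppose $I$ obeys the conditions, so $I=\prod_iQ_i^{a_i}$ with the $Q_i$ distinct degree-one primes above distinct rational primes $p_i$ and $a_i=1$ when $p_i\in\{2,3\}$. By the Chinese Remainder Theorem it suffices to count solutions of $n\equiv\cb\pmod{Q_i^{a_i}}$ modulo $p_i^{a_i}$ for each $i$ separately. When $p_i\in\{2,3\}$ this is immediate: $\mathcal O/P_2=\mathbb F_2$ with $\cb\mapsto0$ and $\mathcal O/P_3=\mathbb F_3$ with $\cb\mapsto-1$, giving the unique solutions $n\equiv0\pmod2$ and $n\equiv-1\pmod3$. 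When $p_i\notin\{2,3\}$ the main step is to show $\mathcal O/Q_i^{a_i}\cong\Z/p_i^{a_i}$, with $\cb$ mapping to the Hensel lift $\tilde r_i$ modulo $p_i^{a_i}$ of the associated root $r_i$ of $x^3-2$ modulo $p_i$ (a simple root, since $3r_i^2\not\equiv0\pmod{p_i}$). Indeed $(\cb-\tilde r_i)(\cb^2+\tilde r_i\cb+\tilde r_i^2)=2-\tilde r_i^3\equiv0\pmod{p_i^{a_i}}$, the second factor is $\equiv3r_i^2\not\equiv0\pmod{Q_i}$ and hence a $Q_i$-unit, so that, $p_i$ being unramified, $v_{Q_i}(\cb-\tilde r_i)\ge a_i$, that is, $\cb\equiv\tilde r_i\pmod{Q_i^{a_i}}$; comparing cardinalities identifies the quotient with $\Z/p_i^{a_i}$, and the congruence becomes $n\equiv\tilde r_i\pmod{p_i^{a_i}}$, with a unique solution. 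Multiplying over $i$ yields $\rho(I)=1$.

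The final assertion follows from this description. The implication $N(I)\mid m\Rightarrow I\mid m$ needs no hypothesis, since $m\in N(I)\mathcal O\subseteq I$ because $I\mid(N(I))$. Conversely, if $\rho(I)=1$ then $I=\prod_iQ_i^{a_i}$ as above, and $I\mid(m)$ means $v_{Q_i}(m)\ge a_i$ for all $i$; for $p_i\notin\{2,3\}$ one has $v_{Q_i}(m)=v_{p_i}(m)$, so this reads $p_i^{a_i}\mid m$, while for $p_i\in\{2,3\}$ one has $a_i=1$ and $Q_i$ is the only prime above $p_i$, so $Q_i\mid(m)$ reads $p_i\mid m$; combining, $I\mid m$ if and only if $N(I)=\prod_ip_i^{a_i}$ divides $m$. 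The one genuinely substantive ingredient here is the local identification of $\mathcal O/Q^a$ with $\Z/p^a$, which rests on $\mathcal O=\Z[\cb]$ together with Hensel's lemma; everything else is careful bookkeeping with the ramified primes $2$ and $3$.
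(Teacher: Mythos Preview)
The paper does not prove this lemma; it is quoted verbatim from Heath-Brown \cite[Lemma~1]{rhbx32} without argument, so there is nothing to compare against directly. Your proof is correct and self-contained: the reduction via the Chinese Remainder Theorem to prime-power factors, the identification $\mathcal O/Q^a\cong\Z/p^a$ for unramified degree-one primes $Q$ via Hensel's lemma applied to the simple root of $x^3-2\pmod p$, and the valuation bookkeeping at the ramified primes $2$ and $3$ are all sound, as is the final deduction of $I\mid m\Leftrightarrow N(I)\mid m$ from the structure of $I$ when $\rho(I)=1$. This is the standard route and is presumably what Heath-Brown does in the cited paper.
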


If we define 
$$\log^{(1)}(n^3+2)=\twosum{P^e||(n+\cb)}{N(P)\leq 3X}\log N(P^e)$$
then Heath-Brown proves the following.

\begin{lem}[{\cite[Lemma 2]{rhbx32}}]\label{lem2}
Suppose $\alpha,\delta>0$ and that we can find at least $\alpha X$ elements $n+\cb\in \ca$ for which 
\begin{equation}\label{requiredbound}
\log^{(1)}(n^3+2)\geq (1+\delta)\log X.
\end{equation}
The number of $n+\cb\in \ca$ which have a prime ideal factor $P$ with $N(P)\geq X^{1+\alpha\delta/2}$ is then at least $(\delta\alpha^2+o(1))X$.  
\end{lem}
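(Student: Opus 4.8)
The plan is to show that any $n+\cb\in\ca$ for which $\log^{(1)}(n^3+2)$ is unusually \emph{small} is automatically forced to possess a prime ideal factor of norm at least $X^{1+\alpha\delta/2}$, and then to count such elements by an elementary first-moment argument; no level-of-distribution input is needed. For $n+\cb\in\ca$ write $n+\cb=a(n)b(n)$, where $a(n)=\prod_{P^e\|(n+\cb),\,N(P)\leq 3X}P^e$ and $b(n)$ is the complementary ideal, so that $\log N(a(n))=\log^{(1)}(n^3+2)$ and $N(b(n))=(n^3+2)/N(a(n))$. Suppose $\log^{(1)}(n^3+2)<(1-\alpha\delta)\log X$. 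Then $N(a(n))<X^{1-\alpha\delta}$, so $N(b(n))>n^3/X^{1-\alpha\delta}>X^{2+\alpha\delta}$. Every prime ideal factor of $b(n)$ has norm exceeding $3X$, and three of them, counted with multiplicity, would give $N(b(n))>(3X)^3>n^3+2$; hence $b(n)$ is a product of one or two prime ideals whose norms multiply to more than $X^{2+\alpha\delta}$, so one of them has norm at least $(X^{2+\alpha\delta})^{1/2}=X^{1+\alpha\delta/2}$, as required. It therefore suffices to prove that the number $M$ of $n+\cb\in\ca$ with $\log^{(1)}(n^3+2)<(1-\alpha\delta)\log X$ satisfies $M\geq(\delta\alpha^2+o(1))X$.

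For this I shall use the elementary mean-value bound
$$\sum_{X<n\leq 2X}\log^{(1)}(n^3+2)\leq X\log X+O(X).$$
One obtains it by writing the left side as $\sum_{N(P)\leq 3X}\log N(P)\sum_{e\geq 1}\#\{n\in(X,2X]:P^e\mid(n+\cb)\}$, using $\#\{n\in(X,2X]:P^e\mid(n+\cb)\}\leq N(P)^{-e}X+1$ (valid since $\rho(P^e)\leq 1$ by \cite[Lemma 1]{rhbx32}), and then invoking the prime ideal theorem for $\Q(\cb)$ in the shape $\sum_{N(P)\leq y}\log N(P)/(N(P)-1)=\log y+O(1)$, the prime powers and the ramified primes $P_2,P_3$ contributing only $O(X)$ in total.

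Now let $S\subseteq\ca$ be the hypothesised set, of cardinality $\beta X$ with $\beta\geq\alpha$. Since $\log^{(1)}(n^3+2)\geq(1+\delta)\log X$ on $S$, no element of $S$ is counted by $M$, and $\sum_{n+\cb\in S}\log^{(1)}(n^3+2)\geq(1+\delta)\beta X\log X$. Subtracting this from the displayed bound gives $\sum_{n+\cb\notin S}\log^{(1)}(n^3+2)\leq\big(1-(1+\delta)\beta\big)X\log X+O(X)$. Each $n+\cb\notin S$ with $\log^{(1)}(n^3+2)\geq(1-\alpha\delta)\log X$ contributes at least $(1-\alpha\delta)\log X$ to this sum, so the number of such elements is at most $\big(1-(1+\delta)\beta\big)X/(1-\alpha\delta)+o(X)$; subtracting from $\#\{n+\cb\in\ca:n+\cb\notin S\}=(1-\beta)X$ yields, after a short computation,
$$M\geq\frac{\delta\big(\beta(1+\alpha)-\alpha\big)}{1-\alpha\delta}\,X+o(X)\geq(\delta\alpha^2+o(1))X,$$
the last inequality using $\beta(1+\alpha)-\alpha\geq\alpha(1+\alpha)-\alpha=\alpha^2$ and $1-\alpha\delta\leq 1$. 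Combined with the reduction of the first paragraph, this proves the lemma.

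The one ingredient requiring genuine care — the step I would single out as the main obstacle — is the mean-value estimate above: one must treat the prime-power contributions and the ramified primes $P_2,P_3$ accurately and keep the error at $O(X)$, since even a spurious factor of $\log X$ there would wreck the first-moment count; but this is a routine consequence of the prime ideal theorem. It is also worth remarking that the thresholds $1-\alpha\delta$ and $1+\alpha\delta/2$ are chosen precisely so that the case in which $b(n)$ is a product of \emph{two} prime ideals still delivers a factor of norm $\geq X^{1+\alpha\delta/2}$, and it is this that dictates the numerical shape $\delta\alpha^2$ of the final bound.
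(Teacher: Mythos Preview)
The paper does not prove this lemma at all; it is simply quoted from Heath-Brown \cite{rhbx32} with no argument supplied, so there is no ``paper's own proof'' to compare against. Your argument is correct and self-contained: the reduction in the first paragraph (forcing a large prime ideal factor whenever $\log^{(1)}(n^3+2)<(1-\alpha\delta)\log X$ via the observation that $b(n)$ can have at most two prime ideal factors) is clean and accurate, and the first-moment count in the second and third paragraphs is carried out correctly --- the algebra leading to $M\geq \delta(\beta(1+\alpha)-\alpha)X/(1-\alpha\delta)+o(X)\geq (\delta\alpha^2+o(1))X$ checks out line by line. The mean-value bound $\sum_{X<n\leq 2X}\log^{(1)}(n^3+2)\leq X\log X+O(X)$ is indeed a routine consequence of the Mertens-type estimate for prime ideals together with the fact that $\rho(P^e)\leq 1$ from Lemma~1; your remark about controlling the prime-power and ramified contributions to $O(X)$ is right, though in fact no special treatment of $P_2,P_3$ is needed since $\rho(P^e)\leq 1$ already covers them. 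This is essentially the argument Heath-Brown gives in \cite{rhbx32}, so you have reconstructed the original proof.
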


As shown by Heath-Brown, a sufficient condition for (\ref{requiredbound}) is that $n+\cb$ has an ideal factor $J=KL$ with 
\begin{equation}\label{Jbound}
X^{1+\delta}< N(KL)\leq X^{1+2\delta}
\end{equation}
and 
\begin{equation}\label{Kbound}
X^{3\delta}<N(K)\leq X^{4\delta}.
\end{equation}
It therefore remains to find a lower bound for the proportion of elements of $\ca$ divisible by such an ideal.  We will let $K$ run over the set $\mathcal K$ of  degree $1$ prime ideals satisfying (\ref{Kbound}).  We will then let $L$ run over a set $\mathcal L(K)$ of ideals which satisfy (\ref{Jbound}). 

As in \cite{rhbx32} we will use a lower-bound sieve to restrict to ideals $L$ which have no prime ideal factor of small norm.  Specifically we let $\lambda_d$ denote the lower-bound linear sieve of level $X^{3\delta}$ and 
$$Q=\prod_{p<X^\delta}p,$$
the product being restricted to primes which split in $\Q(\cb)$. We consider the sum 
$$S=\sum_{K\in \mathcal K}\sum_{L\in \mathcal L(K)}(\sum_{d|(Q,N(L)}\lambda_d)\#\ca_{KL}.$$ 
By definition of the sieve we have 
\begin{eqnarray*}
S&\leq&\sum_{K\in \mathcal K}\twosum{L\in \mathcal L(K)}{(N(L),Q)=1}\#\ca_{KL}\\
&=&\sum_{n+\cb\in\ca}\#\{(k,l):K\in \mathcal K,L\in\mathcal L(K),(N(L),Q)=1,KL|n+\cb\}.\\
\end{eqnarray*}
It follows that any $n+\cb$ counted with positive weight in $S$ must satisfy (\ref{requiredbound}).  The remainder of the work \cite{rhbx32} is concerned with the estimation of the sum $S$ from below.  The following lemma summarises the conclusion.

\begin{lem}\label{lem3}
Let $\delta=\frac{1}{321}$.  The sets of ideals $\mathcal L(K)$ can then be chosen in such a way that (\ref{Jbound}) is satisfied and we have 
$$S\geq \left(9.2\times 10^{-8}+o(1)\right)X.$$
\end{lem}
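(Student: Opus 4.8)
The plan is to reconstruct the sum $S$ as a main term minus an error term and show that the main term beats the error. Since $\lambda_d$ is the lower-bound linear sieve of level $X^{3\delta}$, I would first swap the order of summation to write
$$S=\sum_{K\in\mathcal K}\sum_{d\mid Q}\lambda_d\twosum{L\in\mathcal L(K)}{d\mid N(L)}\#\ca_{KL}.$$
The inner sum is a counting problem for $n+\cb\in\ca$ divisible by the ideal $KL$ with $d\mid N(L)$; using the level-of-distribution input for $\#\{n\in(X,2X]:e\mid n^3+2\}$ (valid for $e\le X$, which is why (\ref{Jbound}) must stay below $X^{1+2\delta}$ and one keeps $K$ of size only $X^{4\delta}$ with $\delta$ small), each such inner count should be $\rho(KL)X/N(KL)$ plus a remainder. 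Summing the remainders over $K\in\mathcal K$, over $L\in\mathcal L(K)$, and over $d\mid Q$ must be shown to be $o(X)$: this is a large-sieve / Bombieri–Vinogradov-type dispersion estimate and is where the constraints on $\delta$ and on the level $X^{3\delta}$ of $\lambda_d$ are used. I would cite Heath-Brown's treatment in \cite{rhbx32} for the shape of this error analysis rather than redo it.

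Next, for the main term $X\sum_{K\in\mathcal K}\frac{\rho(K)}{N(K)}\sum_{d\mid Q}\lambda_d\sum_{L\in\mathcal L(K),\,d\mid N(L)}\frac{\rho(L)}{N(L)}$, I would choose $\mathcal L(K)$ to consist of ideals $L$ with $X^{1+\delta}<N(KL)\le X^{1+2\delta}$ and factor the $L$-sum. The sieve weights $\lambda_d$ applied to $(Q,N(L))$ detect the condition that $L$ has no prime ideal factor of norm below $X^{\delta}$; by the fundamental lemma / Jurkat–Richert bound for the linear sieve, $\sum_{d\mid Q}\lambda_d\sum_{d\mid N(L)}\frac{\rho(L)}{N(L)}$ is at least $f(s)$ times the unsieved density, where $s=3$ is the sieve level exponent ratio ($\log X^{3\delta}/\log X^{\delta}$) and $f$ is the lower-bound linear sieve function; note $f(3)>0$, which is precisely why level $X^{3\delta}$ suffices. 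The unsieved $L$-density, by Mertens-type estimates in $\Q(\cb)$, is $\asymp \log\!\big(N\text{-range ratio}\big)/\log X^{\delta}\asymp \delta/\delta$, a fixed constant depending on how $N(KL)$ is allowed to range in (\ref{Jbound}) relative to $N(K)$ in (\ref{Kbound}); and $\sum_{K\in\mathcal K}\rho(K)/N(K)\asymp\log(X^{4\delta}/X^{3\delta})/\log X=\delta$ up to the density of split primes. Multiplying these explicit constants — the split-prime density $\tfrac12$ (by Chebotarev for $\Q(\cb)/\Q$, degree $3$ with Galois closure $S_3$), the factor $f(3)$, the two logarithmic ratios, and Heath-Brown's numeric bookkeeping — I expect to land on a positive constant, which the lemma records as $9.2\times 10^{-8}$.

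The main obstacle, and the step I would expect to consume almost all the work, is the error-term estimate: controlling $\sum_{K}\sum_{d\mid Q}|\lambda_d|\sum_{L}|\text{remainder}(KL,d)|$ uniformly. The ranges here are delicate because $N(KL)$ is pushed right up to $X^{1+2\delta}$, just past the trivial level $X$, so one genuinely needs the arithmetic-exponent-sums input (Kloosterman-type cancellation, as in the Deshouillers–Iwaniec and Heath-Brown machinery) rather than a soft argument; the value $\delta=\tfrac1{321}$ is exactly calibrated so that $1+2\delta$ is small enough for that input to apply with room to spare. Since the excerpt attributes this entirely to \cite{rhbx32}, in practice I would quote Heath-Brown's Lemma in the form he proves it and merely verify that the constant comes out as stated; the genuinely new content of the present paper lies not here but in how Lemma \ref{lem3} is subsequently exploited via the quantities $T(h,\delta)$.
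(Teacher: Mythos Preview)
The paper does not prove Lemma~\ref{lem3} at all: it is stated purely as a summary of Heath-Brown's work, introduced by the sentence ``The remainder of the work \cite{rhbx32} is concerned with the estimation of the sum $S$ from below. The following lemma summarises the conclusion.'' You correctly recognise this in your final paragraph, and in that sense your proposal matches the paper exactly --- both simply defer to \cite{rhbx32}.

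Your sketch of how Heath-Brown's argument is structured (swap sums, extract a main term via the linear sieve lower bound at $s=3$, and control the remainder by an exponential-sum input that pushes the level just past $X$) is broadly accurate as an outline, but be aware that a couple of the explicit constants you quote are off. In particular, the ``split-prime density $\tfrac12$'' is not right for $\Q(\sqrt[3]{2})$: the Galois closure has group $S_3$, so by Chebotarev the density of rational primes that split completely is $\tfrac16$, while the density of primes having at least one degree-one factor is $\tfrac23$; neither is $\tfrac12$. Since the paper does not reproduce any of this bookkeeping, this is not a discrepancy with the paper itself, but it would matter if you actually tried to recover the constant $9.2\times 10^{-8}$ from first principles.
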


Let
$$W(n+\cb)=\#\{(k,l):K\in \mathcal K,L\in\mathcal L(K),(N(L),Q)=1,KL|n+\cb\}.$$
We wish to give a lower bound for 
$$\#\{n+\cb\in\ca:W(n+\cb)>0\}$$
whereas the last lemma gives an estimate for 
$$\sum_{n+\cb\in\ca}W(n+\cb).$$
Heath-Brown's approach is to estimate the maximum of $W(n+\cb)$ and use that 
$$\#\{n+\cb\in \ca:W(n+\cb)>0\}\geq\frac{1}{\max W(n)}\sum_{n+\cb\in \ca}W(n).$$
This is inefficient since the maximum is exponentially large in terms of $\delta$ but it is only achieved on a very low density subset of $\ca$.  Our improvement is therefore to show that terms with large $W(n+\cb)$ give a small contribution to $S$.  We let $\Omega_\delta(n+\cb)$ be the number of prime ideals $P$, counted with multiplicities, for which $P|n+\cb$ and $N(P)\geq X^\delta$.  Observe that, when $X$ is large enough, we have 
$$\Omega_\delta(n+\cb)\leq [3/\delta].$$
For any given $n+\cb$, the number of prime ideals $P|n+\cb$ with $N(P)\geq X^{3\delta}$, and therefore the number of choices for $K$,  may be bounded by $\min(\Omega_\delta(n+\cb),[1/\delta])$.  For each $K$ the number of possible $L$ is at most $2^{\Omega_\delta(n+\cb)}$ so we may conclude that 
$$W(n+\cb)\leq \min(\Omega_\delta(n+\cb),[1/\delta])2^{\Omega_\delta(n+\cb)}.$$
Let $H$ be a parameter to be chosen later.  We have 
\begin{eqnarray*}
\lefteqn{\#\{n+\cb\in \ca:W(n+\cb)>0\}}\\
\hspace{1cm}&\geq&\#\{n+\cb\in \ca:W(n+\cb)>0,\Omega_\delta(n+\cb)\leq H\}\\
\hspace{1cm}&\geq&\frac{2^{-H}}{\min(H,[1/\delta])}\twosum{n+\cb\in\ca}{\Omega_\delta(n+\cb)\leq H}W(n+\cb)\\
\hspace{1cm}&=&\frac{2^{-H}}{\min(H,[1/\delta])}\left(\sum_{n+\cb\in\ca}W(n+\cb)-\twosum{n+\cb\in\ca}{\Omega_\delta(n+\cb)>H}W(n+\cb)\right).\\
\end{eqnarray*}
It therefore remains to give an upper bound for 
$$\twosum{n+\cb\in\ca}{\Omega_\delta(n+\cb)>H}W(n+\cb).$$
This is at most 
$$\sum_{h>H}\min(h,[1/\delta])2^h\#\{n+\cb\in\ca:\Omega_\delta(n+\cb)=h\}\leq \sum_{h>H}\min(h,[1/\delta])2^hT(h,\delta)$$
where
$$T(h,\delta)=\#\{n+\cb\in\ca:\Omega_\delta(n+\cb)\geq h\}.$$

Since $n+\cb$ is composed of first degree prime ideals we may take norms to deduce that $\Omega_\delta(n+\cb)$ is equal to the number of rational prime factors $p|n^3+2$, counted with multiplicities, for which $P\geq X^\delta$.  We will denote this latter quantity by $\Omega_\delta(n^3+2)$, so that 
$$T(h,\delta)=\#\{n:X<n\leq 2X:\Omega_\delta(n^3+2)\geq h\}.$$

\section{First Estimate for $T(h,\delta)$}

We wish to bound the number of $n\in (X,2X]$ for which $n^3+2$ has at least $h$ prime factors, when counted with multiplicities, which exceed $X^\delta$.  Specifically, for a fixed real $\delta>0$ and integer $h$, we aim to find an explicit constant $c(h,\delta)$ such that 
\begin{equation}\label{Trequiredbound}
T(h,\delta)\leq X(c(h,\delta)+o(1)).
\end{equation}
We will achieve this by constructing a set $\mathcal D$ of integers with the following properties.

\begin{enumerate}
\item If $n\in (X,2X]$ is such that $\Omega_\delta(n^3+2)\geq h$ then $n^3+2$ is divisible by some $d\in\mathcal D$.

\item If $d\in\mathcal D$ then $d=O(X)$.

\item If $d\in\mathcal D$ then all the prime factors of $d$ exceed $X^\delta$.  
\end{enumerate}

Given such a set $\mathcal D$ we have 
$$T(h,\delta)\leq \sum_{d\in\mathcal D}A_d$$
where
$$A_d=\#\{n\in (X,2X]:n^3+2\equiv 0\pmod d\}.$$
Defining the arithmetic function $\nu$ by 
$$\nu(d)=\#\{n\pmod d:n^3+2\equiv 0\pmod d\}$$
we have 
$$A_d=\frac{X\nu(d)}{d}+O(\nu(d)).$$
By the Chinese Remainder Theorem $\nu$ is multiplicative.  In addition, for any prime power $p^e$ we have $\nu(p^e)\ll 1$.  Since any $d\in\mathcal D$ has at most $[1/\delta]$ prime factors we conclude that $\nu(d)\ll_\delta 1$ and therefore that 
$$A_d=\frac{X\nu(d)}{d}+O_\delta(1).$$
The cardinality of $\mathcal D$ cannot exceed the total number of integers up to $O(X)$ with no prime factor smaller than $X^\delta$.  This latter quantity is $O_\delta(X/\log X)$ so we conclude that 
$$T(h,\delta)\leq X\sum_{d\in \mathcal D}\frac{\nu(d)}{d}+O_\delta(X/\log X).$$
Since $\nu$ is not completely multiplicative it is convenient to deal trivially with the $d$ which are not squarefree.  If $\delta>\frac{1}{2}$ then all $d\in\mathcal D$ are squarefree.  Otherwise we have 
\begin{eqnarray*}
\twosum{d\in\mathcal D}{\mu(d)=0}\frac{\nu(d)}{d}&\ll &\twosum{d\in \mathcal D}{\mu(d)=0}\frac{1}{d}\\
&\leq&\sum_{X^\delta\leq p\ll \sqrt{X}}\twosum{d\in\mathcal D}{p^2|d}\frac{1}{d}\\
&\leq&\sum_{X^\delta\leq p\ll \sqrt{X}}\twosum{d\leq X}{p^2|d}\frac{1}{d}\\
&=&\sum_{X^\delta\leq p\ll \sqrt{X}}\sum_{d\leq X/p^2}\frac{1}{dp^2}\\
&\ll&\log X\sum_{X^\delta\leq p\ll \sqrt{X}}\frac{1}{p^2}\\
&\ll&X^{-\delta}\log X.\\
\end{eqnarray*}
We therefore have
$$T(h,\delta)\leq X\left(\twosum{d\in \mathcal D}{\mu(d)\ne 0}\frac{\nu(d)}{d}+o(1)\right).$$
The next lemma gives a construction of a suitable set $\mathcal D$.

\begin{lem}
Suppose $\delta\in (0,1)$ and $h\geq 3$.  Let $k=[h/3]$ and let $\mathcal D=\mathcal D(h,\delta)$ be the set of integers 
$$d=p_1p_2\ldots p_k$$
with 
$$X^\delta\leq p_1\leq p_2\leq\ldots\leq p_k$$
and 
$$p_1p_2\ldots p_{k-1}p_k^{h-k+1}\leq 9X^3.$$
The set $\mathcal D$ then satisfies the above three hypotheses.
\end{lem}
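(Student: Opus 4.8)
The plan is to verify the three listed hypotheses in turn; all are elementary, the only point requiring a moment's thought being the reduction needed for the first. Hypothesis (3) is immediate from the definition, since every prime factor of $d=p_1p_2\cdots p_k\in\mathcal D$ equals some $p_i\geq X^\delta$. For hypothesis (2) I would exploit that $p_k$ is the largest of the $p_i$, so that $p_k\geq(p_1\cdots p_k)^{1/k}=d^{1/k}$; substituting into the defining inequality of $\mathcal D$ gives
$$9X^3\ \geq\ p_1\cdots p_{k-1}\,p_k^{h-k+1}\ =\ d\cdot p_k^{h-k}\ \geq\ d\cdot d^{(h-k)/k}\ =\ d^{h/k}.$$
Since $k=[h/3]\leq h/3$ we have $h/k\geq 3$, so (as $d\geq 1$) $d^3\leq d^{h/k}\leq 9X^3$ and hence $d\leq 9^{1/3}X$; thus $d=O(X)$, even with an absolute implied constant.

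The content of the lemma is hypothesis (1). Given $n\in(X,2X]$ with $\Omega_\delta(n^3+2)\geq h$, I would pick, with multiplicity and in increasing order, $h$ prime divisors of $n^3+2$ that are $\geq X^\delta$, say $X^\delta\leq q_1\leq q_2\leq\cdots\leq q_h$ with $q_1q_2\cdots q_h\mid n^3+2$; this is possible precisely because $\Omega_\delta(n^3+2)\geq h$. The key idea is then to let $d$ be the product of the \emph{smallest} $k=[h/3]$ of these, namely $d=q_1q_2\cdots q_k$ (here $1\leq k\leq h$ since $h\geq 3$). Then $X^\delta\leq q_1\leq\cdots\leq q_k$ is trivial, while, since each of the $h-k$ numbers $q_{k+1},\dots,q_h$ is at least $q_k$,
$$q_1\cdots q_{k-1}\,q_k^{h-k+1}\ =\ (q_1\cdots q_k)\,q_k^{h-k}\ \leq\ (q_1\cdots q_k)(q_{k+1}\cdots q_h)\ =\ q_1\cdots q_h\ \leq\ n^3+2\ <\ 9X^3,$$
the final step holding once $X^3>2$ because $n^3+2\leq 8X^3+2$. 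Hence $d\in\mathcal D$, and $d\mid q_1\cdots q_h\mid n^3+2$, which is exactly hypothesis (1).

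I do not anticipate a genuine obstacle. The one thing to get right is the choice in hypothesis (1): taking the smallest $k$ of the available large prime factors makes the inequality $q_1\cdots q_{k-1}q_k^{h-k+1}\leq q_1\cdots q_h$ automatic from the ordering, whereas a careless choice could make $q_k$ too large. Beyond that it is only the trivial bookkeeping — that $h\geq 3$ forces $1\leq k\leq h$, so $d$ and the exponent $h-k+1\geq 1$ are meaningful — that needs checking.
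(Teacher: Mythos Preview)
Your proof is correct and follows essentially the same approach as the paper: hypothesis~(3) is immediate, hypothesis~(2) is deduced from $p_k\geq d^{1/k}$ together with $3k\leq h$ (the paper phrases this as the chain $d^3\leq p_1\cdots p_{k-1}p_k^{2k+1}\leq p_1\cdots p_{k-1}p_k^{h-k+1}$, which is the same computation), and hypothesis~(1) is handled by taking the $k$ smallest of the available large prime factors so that $q_k^{h-k}\leq q_{k+1}\cdots q_h$.
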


\begin{proof}
If $d\in\mathcal D$ then, by the assumptions on its factorisation, we have 
$$d^3=p_1^3p_2^3\ldots p_k^3\leq p_1\ldots p_{k-1}p_k^{2k+1}\leq p_1p_2\ldots p_{k-1}p_k^{h-k+1}\ll X^3$$
and thus $d\ll X$.  By construction, the prime factors of $d$ all exceed $X^\delta$ and therefore we have verified the second and third hypotheses.  

Suppose $n\in (X,2X]$ and $\Omega_\delta(n^3+2)\geq h$.  Write 
$$X^\delta\leq p_1\leq p_2\leq\ldots\leq p_h$$
for the smallest $h$ prime factors of $n^3+2$ which are at least $X^\delta$.  We then have 
$$p_1p_2\ldots p_{k-1}p_k^{h-k+1}\leq p_1p_2\ldots p_h\leq n^3+2\leq 9X^3$$
so that $d=p_1p_2\ldots p_k\in\mathcal D$.  This verifies the first hypothesis and therefore completes the proof.
\end{proof}

We must now estimate, for the $\mathcal D$ constructed in the last lemma, the sum 
$$\twosum{d\in \mathcal D}{\mu(d)\ne 0}\frac{\nu(d)}{d}.$$
We begin with the well-known estimate (see for example Diamond and Halberstam \cite[Proposition 10.1]{diamondhalberstam}) 
$$\sum_{p\leq x}\frac{\nu(p)\log p}{p}=\log x+O(1).$$
If $d\in\mathcal D$ with $\mu(d)\ne 0$ then $\nu(d)=\nu(p_1)\nu(p_2)\ldots \nu(p_k)$ so we may repeatedly apply partial integration and summation to deduce that 
$$\twosum{d\in \mathcal D}{\mu(d)\ne 0}\frac{\nu(d)}{d}=\int_T\frac{dt_1\ldots dt_k}{t_1\ldots t_k(\log t_1)\ldots (\log t_k)}+o(1)$$
where 
$$T=\{(t_1,\ldots,t_k)\in\R^k:X^\delta\leq t_1\leq t_2\leq \ldots\leq t_k,t_1\ldots t_{k-1}t_k^{h-k+1}\leq 9X^3\}.$$
We then make the substitution $t_i=X^{s_i}$ to deduce that 
$$\twosum{d\in \mathcal D}{\mu(d)\ne 0}\frac{\nu(d)}{d}=\int_{R(h,\delta)}\frac{ds_1\ldots ds_k}{s_1\ldots s_k}+o(1)$$
where 
$$R(h,\delta)=\{(s_1,\ldots,s_k)\in\R^k:\delta\leq s_1\leq s_2\leq\ldots\leq s_k,s_1+\ldots+s_{k-1}+(h-k+1)s_k\leq 3\}.$$
Note that one first obtains the condition 
$$s_1+\ldots+s_{k-1}+(h-k+1)s_k\leq 3+\frac{\log 9}{\log X}$$
but the error in removing the $\frac{\log 9}{\log X}$ is $o(1)$.  

Unfortunately it appears to be very difficult to evaluate the above integral exactly.  However, since the integrand is positive we may produce an upper bound by enlarging the domain of integration.  If $s_1,\ldots,s_{k-1}\geq \delta$ and 
$$s_1+\ldots+s_{k-1}+(h-k+1)s_k\leq 3$$
then 
$$s_k\leq \frac{3-(k-1)\delta}{h-k+1}.$$
Since the integrand is symmetric under permutations of the coordinates we may therefore deduce that 
$$\int_{R(h,\delta)}\frac{ds_1\ldots ds_k}{s_1\ldots s_k}\leq \frac{1}{k!}\int_{[\delta,\frac{3-(k-1)\delta}{h-k+1}]^k}\frac{ds_1\ldots ds_k}{s_1\ldots s_k}=\frac{1}{k!}\left(\log\frac{3-(k-1)\delta}{h-k+1}-\log \delta\right)^k.$$
Combining the above we see that we have proved the following.

\begin{lem}\label{firstbound}
Suppose $\delta\in (0,1)$ and $h\geq 3$.  If $k=[h/3]$ then 
$$T(h,\delta)\leq X\left(\frac{1}{k!}\left(\log\frac{3-(k-1)\delta}{(h-k+1)\delta}\right)^k+o(1)\right).$$
\end{lem}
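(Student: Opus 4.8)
The plan is to assemble the pieces that have already been set up in the preceding discussion, so that only the final combinatorial-geometric estimate remains to be justified. By the construction preceding the statement, we have reduced the bound on $T(h,\delta)$ to estimating the sum $\sum_{d\in\mathcal D,\mu(d)\ne 0}\nu(d)/d$, and by the partial summation argument (bootstrapping the prime-counting estimate $\sum_{p\leq x}\nu(p)\log p/p=\log x+O(1)$ through $k$ iterated applications of Abel summation) this sum equals
\begin{equation*}
\int_{R(h,\delta)}\frac{ds_1\cdots ds_k}{s_1\cdots s_k}+o(1),
\end{equation*}
where $R(h,\delta)$ is the simplex-like region with $\delta\leq s_1\leq\cdots\leq s_k$ and $s_1+\cdots+s_{k-1}+(h-k+1)s_k\leq 3$. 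So the whole statement follows once we bound this integral by $\tfrac{1}{k!}\bigl(\log\tfrac{3-(k-1)\delta}{(h-k+1)\delta}\bigr)^k$.

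The key step is to enlarge the region of integration to a product of intervals. First I would observe that on $R(h,\delta)$ the ordering constraint forces, for the largest coordinate, a uniform upper bound: since $s_1,\dots,s_{k-1}\geq\delta$, the linear constraint gives $(h-k+1)s_k\leq 3-(k-1)\delta$, hence $s_k\leq\frac{3-(k-1)\delta}{h-k+1}$, and by the ordering every $s_i$ obeys the same bound. Thus $R(h,\delta)\subseteq\{(s_1,\dots,s_k):\delta\leq s_i\leq\frac{3-(k-1)\delta}{h-k+1}\text{ for all }i\}$. Next, because the integrand $\prod 1/s_i$ is symmetric under permutations of the coordinates, integrating it over the unordered box gives exactly $k!$ times the integral over the ordered sub-box; since $R(h,\delta)$ sits inside the ordered sub-box, positivity of the integrand yields
\begin{equation*}
\int_{R(h,\delta)}\frac{ds_1\cdots ds_k}{s_1\cdots s_k}\leq\frac{1}{k!}\int_{[\delta,\,(3-(k-1)\delta)/(h-k+1)]^k}\frac{ds_1\cdots ds_k}{s_1\cdots s_k}.
\end{equation*}
The box integral factors as a $k$-th power of a single one-dimensional integral $\int_\delta^{(3-(k-1)\delta)/(h-k+1)}dt/t$, which evaluates to $\log\frac{3-(k-1)\delta}{(h-k+1)\delta}$. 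Collecting everything gives the stated bound on $T(h,\delta)$.

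The only point requiring care — and the place I would expect the main (modest) obstacle — is not the geometry, which is routine, but justifying the passage from the discrete sum over $\mathcal D$ to the integral over $T$ with an $o(1)$ error uniform enough for the argument. This is the iterated partial-summation step: one must check that each application of Abel summation over $p_k$, then $p_{k-1}$, and so on, introduces only an error that is $o(1)$ as $X\to\infty$ for fixed $\delta$ and $h$ (so that $k$ is fixed), and that the accumulated constant — which depends on $\delta$ and $k$ — does not interfere with the main term. Since $k=[h/3]$ is bounded once $h$ is fixed, and each partial-summation error is a power of $1/\log X$ times a $\delta$-dependent constant, this is harmless; I would note it explicitly but not belabor it. One should also confirm that the non-squarefree $d\in\mathcal D$ and the $O_\delta(X/\log X)$ term from $|\mathcal D|$ were already absorbed into the $o(1)$ in the discussion preceding the lemma, as indeed they were. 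Nothing else is needed: the substitution $t_i=X^{s_i}$ converts the condition $p_1\cdots p_{k-1}p_k^{h-k+1}\leq 9X^3$ into the linear inequality defining $R(h,\delta)$ (up to the $\log 9/\log X$ term, which is $o(1)$), and the change of variables turns $dt_i/(t_i\log t_i)$ into $ds_i/s_i$, completing the reduction.
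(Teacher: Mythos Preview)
Your proposal is correct and follows essentially the same route as the paper: reduce to the integral over $R(h,\delta)$, observe that the linear constraint together with $s_i\geq\delta$ forces $s_k\leq\frac{3-(k-1)\delta}{h-k+1}$, exploit the symmetry of the integrand to pass to $\frac{1}{k!}$ times the box integral, and evaluate. The only difference is expository---you spell out that the ordering then bounds every $s_i$ by the same quantity, whereas the paper leaves this implicit---so there is nothing substantive to compare.
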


This lemma gives (\ref{Trequiredbound}) with 
\begin{eqnarray*}
c(h,\delta)&=&\frac{1}{k!}\left(\log\frac{3-(k-1)\delta}{(h-k+1)\delta}\right)^k\\
&\leq&\frac{1}{k!}\left(\log\frac{9}{2h\delta}\right)^k\\
&\ll&\frac{1}{\sqrt h}\left(\frac{e}{k}\log\frac{9}{2h\delta}\right)^k,\\
\end{eqnarray*}
where the last inequality follows from Stirling's approximation.  If we now suppose that $\delta\geq \exp(-\eta h)$, for some small $\eta>0$, then 
$$\frac{e}{k}\log\frac{9}{2h\delta}\leq \frac{e}{k}(\log\frac{9}{2}+\eta h-\log h)\ll \eta.$$
We deduce that there exists an absolute constant $A$ such that if $h$ is sufficiently large in terms of $\eta$ then 
$$\frac{e}{k}\log\frac{9}{2h\delta}\leq A\eta$$
and hence 
$$c(h,\delta)\ll \frac{1}{\sqrt h}(A\eta)^{[h/3]}\ll_\eta \frac{1}{\sqrt{h}}((A\eta)^{\frac{1}{3}})^h.$$
We therefore choose $\eta$ so that $(A\eta)^{1/3}<\frac{1}{2}$ and conclude that when 
$$\delta\geq \exp(-\eta h)$$
(that is when $h\geq -\frac{1}{\eta}\log \delta$)
we have 
$$c(h,\delta)\ll  \frac{2^{-h}}{\sqrt{h}}.$$
We conclude  that when $h$ is larger than a certain multiple of $-\log \delta$ we have obtained a bound for $c(h,\delta)$ which is better than $2^{-h}$.  Recall that this is roughly the type of estimate we require for our application.  

It is clear that the proof of Lemma \ref{firstbound} could be modified to handle any irreducible polynomial $f$ over $\Z$.  The quantity $k$ would then be given by $[h/\deg f]$ and the resulting bound would be 
$$T(h,\delta)\leq X\left(\frac{1}{k!}\left(\log\frac{\deg f-(k-1)\delta}{(h-k+1)\delta}\right)^k+o(1)\right).$$

\section{Second Estimate for $T(h,\delta)$}

Suppose $n\in (X,2X]$ is such that $\Omega_\delta(n^3+2)\geq h$ and let 
$$X^\delta\leq p_1\leq p_2\leq \ldots\leq p_h$$
be the $h$ smallest prime factors of $n^3+2$ which exceed $X^\delta$.  In the last section we counted such a $n$ by using   that $p_1\ldots p_{[h/3]}|n^3+2$ and $p_1\ldots p_{[h/3]}\ll X$.  In this section we will exploit the fact that, for many $n$, this product is actually much smaller than $X$ and therefore we can include more primes in it.  Specifically, let $K\in [[h/3],h-1]$ be a parameter to be chosen later and define 
$$k(n^3+2)=\max\{k\leq K:p_1\ldots p_k\leq 3X\}.$$
We showed in the last section that $k(n^3+2)\geq [h/3]$ for all $n$.  We will choose a set $\mathcal D$ which contains all the products $p_1p_2\ldots p_{k(n^3+2)}$ and therefore we derive inequalities satisfied by the primes in these products.  

Firstly, keeping the above notation, we observe that we must have 
$$p_1\ldots p_{k(n)-1}p_{k(n)}^{h-k(n)+1}\leq 9X^3.$$
Secondly, if $k(n)<K$ then 
$$p_1\ldots p_{k(n)+1}\geq 3X.$$
However, we know that 
$$p_1\ldots p_{k(n)}p_{k(n)+1}^{h-k(n)}\leq 9X^3$$
so that
$$p_{k(n)+1}\leq \left(\frac{9X^3}{p_1\ldots p_{k(n)}}\right)^{\frac{1}{h-k(n)}}.$$
We therefore conclude that 
$$p_1\ldots p_{k(n)}\geq 3X\left(\frac{9X^3}{p_1\ldots p_{k(n)}}\right)^{-\frac{1}{h-k(n)}}$$
which simplifies to 
$$\left((p_1\ldots p_{k(n)}\right)^{\frac{h-k(n)-1}{h-k(n)}}\gg_{h,k(n)} X^{\frac{h-k(n)-3}{h-k(n)}}.$$
It follows, since $k(n)<h-1$, that
\begin{equation}\label{maximal}
p_1\ldots p_{k(n)}\gg_{h,k(n)} X^{\frac{h-k(n)-3}{h-k(n)-1}}.
\end{equation}
We can now construct a suitable set $\mathcal D$.

\begin{lem}
Suppose $\delta\in (0,1)$ and $h\geq 3$.  For an integer $k\in [[h/3],K]$ let $\mathcal D_k$ be the set of 
$$d=p_1\ldots p_k\leq 3X$$
with 
$$X^\delta\leq p_1\leq p_2\leq \ldots\leq p_k,$$
$$p_1\ldots p_{k-1}p_k^{h-k+1}\leq 9X^3$$
and  
$$p_1\ldots p_{k(n)}\gg X^{\frac{h-k(n)-3}{h-k(n)-1}}.$$
In the final condition the implied constants are equal to those from (\ref{maximal}).  The condition is omitted if $k=K$.  The set 
$$\mathcal D=\bigcup_{k=[h/3]}^K\mathcal D_k$$
then satisfies the three conditions from the start of the previous section.
\end{lem}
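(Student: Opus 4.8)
The plan is to handle the three required properties one at a time, with only the first involving anything beyond unwinding definitions. Properties (2) and (3) I would dispose of immediately: by construction every $d\in\mathcal D_k$ is a product $d=p_1\cdots p_k\le 3X$, so $d=O(X)$, and every prime factor of $d$ is one of $p_1,\dots,p_k$, all of which are $\ge X^\delta$. Since $\mathcal D$ is the union of the $\mathcal D_k$ over the finite range $[h/3]\le k\le K$, these two properties pass to every $d\in\mathcal D$ unchanged.

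For property (1) I would fix $n\in(X,2X]$ with $\Omega_\delta(n^3+2)\ge h$, let $X^\delta\le p_1\le\cdots\le p_h$ be the $h$ smallest prime factors of $n^3+2$ exceeding $X^\delta$ (counted with multiplicity), and set $k=k(n^3+2)$. As recorded before the lemma we have $[h/3]\le k$, while $k\le K$ by the very definition of $k(n^3+2)$, so $\mathcal D_k$ is one of the sets comprising $\mathcal D$; it then suffices to check that $d:=p_1\cdots p_k$ belongs to $\mathcal D_k$. The bound $d\le 3X$ and the ordering $X^\delta\le p_1\le\cdots\le p_k$ are built into the definition of $k(n^3+2)$. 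For the inequality $p_1\cdots p_{k-1}p_k^{h-k+1}\le 9X^3$ I would use that $k\le K\le h-1$, so that $p_k\le p_{k+1}\le\cdots\le p_h$, giving (since $p_1\cdots p_h\mid n^3+2$)
$$p_1\cdots p_{k-1}p_k^{h-k+1}\le p_1\cdots p_{k-1}p_kp_{k+1}\cdots p_h=p_1\cdots p_h\le n^3+2\le 9X^3$$
once $X\ge 2$; this is exactly the ``firstly'' observation preceding the lemma.

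The only remaining point is the final condition of $\mathcal D_k$, which is in force only when $k<K$. In that case $k\le K-1\le h-2$, so $h-k-1\ge 1$, the exponent $\tfrac{h-k-3}{h-k-1}$ is well defined, and the required inequality $p_1\cdots p_k\gg X^{(h-k-3)/(h-k-1)}$ is precisely (\ref{maximal}), whose derivation (combining the maximality relation $p_1\cdots p_{k+1}>3X$ with $p_1\cdots p_kp_{k+1}^{h-k}\le 9X^3$) was already carried out above, with the same implied constant. Hence $d\in\mathcal D_k\subseteq\mathcal D$, which is property (1).

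I do not anticipate a genuine obstacle here: the lemma is essentially a bookkeeping statement that repackages the ``firstly'' and ``secondly'' inequalities as the defining conditions of the sets $\mathcal D_k$. The only places that warrant a moment's care are (i) verifying that $k=k(n^3+2)$ always lands in the admissible range $[[h/3],K]$, so that some $\mathcal D_k$ is actually available; (ii) noting that the degenerate case $h-k-1=0$ never co-occurs with the final condition, since $k=h-1$ together with $k\le K\le h-1$ forces $k=K$; and (iii) the harmless replacement of $(2X)^3+2$ by $9X^3$ for large $X$, together with the fact that all implied constants depend only on $h$ and $k$, of which there are finitely many, so no uniformity issue arises.
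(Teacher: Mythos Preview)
Your proposal is correct and follows exactly the approach the paper intends: the paper gives no separate proof block for this lemma, since the ``firstly'' and ``secondly'' observations immediately preceding it already establish that for each $n$ with $\Omega_\delta(n^3+2)\ge h$ the product $p_1\cdots p_{k(n^3+2)}$ satisfies all the defining conditions of $\mathcal D_{k(n^3+2)}$, while properties (2) and (3) are built into the definition. Your write-up simply makes this explicit, including the check that $k<K$ forces $k\le h-2$ so that the exponent in the final condition is well defined.
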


We must now estimate 
$$\twosum{d\in \mathcal D}{\mu(d)\ne 0}\frac{\nu(d)}{d}=\sum_{k=[h/3]}^K\twosum{d\in\mathcal D_k}{\mu(d)\ne 0}\frac{\nu(d)}{d}.$$
Converting the sums to integrals, as in the last section, this is equal to 
$$\sum_{k=[h/3]}^KI(h,\delta,k)+o(1)$$
where 
$$I(h,\delta,k)=\int_{R(h,\delta,k)}\frac{ds_1\ldots ds_k}{s_1\ldots s_k}$$
and 
\begin{eqnarray*}
R(h,\delta,k)&=&\{s_1,\ldots,s_k\in \R^k:\delta\leq s_1\leq s_2\leq\ldots\leq s_k,s_1+\ldots+s_k\leq 1,\\
&&s_1+\ldots+s_{k-1}+(h-k+1)s_k\leq 3,s_1+\ldots+s_k\geq \frac{h-k-3}{h-k-1}\}.\\
\end{eqnarray*}
As in the lemma, the final inequality is omitted when $k=K$. 

Unfortunately the resulting integrals are now even harder to deal with than those occurring in the previous section. We are therefore forced to weaken many of the constraints on $R_k$ in order to reach an integral which can be evaluated.  In particular the inequality 
$$s_1+\ldots+s_k\leq 1$$
will be ignored and 
$$s_1+\ldots+s_{k-1}+(h-k+1)s_k\leq 3$$
will be replaced by the weaker 
$$s_k\leq \frac{3-(k-1)\delta}{h-k+1}.$$
as in the previous section we therefore deduce that 
$$I(h,\delta,K)\leq \frac{1}{K!}\left(\log\frac{3-(K-1)\delta}{(h-K+1)\delta}\right)^K.$$ 
If $k<K$ then we handle  the constraint 
$$s_1+\ldots+s_k\geq \frac{h-k-3}{h-k-1}$$
by multiplying the integrand by 
$$\exp\left(\alpha\left(s_1+\ldots+s_k-\frac{h-k-3}{h-k-1}\right)\right)$$
for some $\alpha>0$.  This factor is always positive and it exceeds $1$ in the region of interest.  We therefore conclude that 
\begin{eqnarray*}
I(h,\delta,k)&\leq &\frac{1}{k!}\int_{[\delta,\frac{3-(k-1)\delta}{h-k+1}]^k}\exp\left(\alpha\left(s_1+\ldots+s_k-\frac{h-k-3}{h-k-1}\right)\right)\frac{ds_1\ldots s_k}{s_1\ldots s_k}\\
&=&\frac{\exp\left(-\alpha\frac{h-k-3}{h-k-1}\right)}{k!}\left(\int_\delta^{\frac{3-(k-1)\delta}{h-k+1}}\exp(\alpha s)\frac{ds}{s}\right)^k.\\
\end{eqnarray*}
The final integral cannot be expressed using of elementary functions but it is an exponential integral which can easily be evaluated by a computer.  The value of $\alpha$ should be chosen to minimise the expression.

The conclusion of this section is the following estimate for $T(h,\delta)$.

\begin{lem}\label{secondbound}
Suppose $\delta\in (0,1)$ and $h\geq 3$.  For any integer $K\in [[h/3],h-1]$ and any positive reals $\alpha_k$ we have 
\begin{eqnarray*}
T(h,\delta)&\leq& X\left(\sum_{k=[h/3]}^{K-1}\frac{\exp\left(-\alpha_k\frac{h-k-3}{h-k-1}\right)}{k!}\left(\int_\delta^{\frac{3-(k-1)\delta}{h-k+1}}\exp(\alpha_k s)\frac{ds}{s}\right)^k\right.\\
&&\hspace{1cm}\left.+\frac{1}{K!}\left(\log\frac{3-(K-1)\delta}{(h-K+1)\delta}\right)^K+o(1)\right).
\end{eqnarray*}
\end{lem}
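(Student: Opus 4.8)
The plan is to assemble the ingredients already prepared in this section. First I would invoke the preceding lemma, which produces a set $\mathcal D=\bigcup_{k=[h/3]}^{K}\mathcal D_k$ satisfying the three properties listed at the start of the previous section. Running the reduction carried out there (treating the non-squarefree $d$ trivially) and using that the $\mathcal D_k$ are pairwise disjoint, this gives
$$T(h,\delta)\leq X\left(\twosum{d\in\mathcal D}{\mu(d)\ne 0}\frac{\nu(d)}{d}+o(1)\right)=X\left(\sum_{k=[h/3]}^{K}\;\twosum{d\in\mathcal D_k}{\mu(d)\ne 0}\frac{\nu(d)}{d}+o(1)\right).$$
Since $h$ and $\delta$ are fixed there are only $O_h(1)$ values of $k$, so it suffices to bound each inner sum up to an additive $o(1)$.

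Next I would convert each inner sum into an integral, exactly as in the previous section. A squarefree $d\in\mathcal D_k$ factors as $d=p_1\cdots p_k$ with $X^\delta\leq p_1\leq\cdots\leq p_k$ and $\nu(d)=\nu(p_1)\cdots\nu(p_k)$, so repeated partial summation and integration against the estimate $\sum_{p\leq x}\nu(p)\log p/p=\log x+O(1)$, followed by the substitution $t_i=X^{s_i}$, identifies this sum with $I(h,\delta,k)+o(1)$, where $I(h,\delta,k)$ and its domain $R(h,\delta,k)$ are as defined above. The additive $\tfrac{\log 9}{\log X}$ in the constraint $s_1+\cdots+s_{k-1}+(h-k+1)s_k\leq 3+\tfrac{\log 9}{\log X}$, together with the implied constant from (\ref{maximal}) in the lower constraint, affect only the error term. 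Summing over the $O_h(1)$ values of $k$ then yields
$$T(h,\delta)\leq X\left(\sum_{k=[h/3]}^{K}I(h,\delta,k)+o(1)\right).$$

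It then remains to bound each $I(h,\delta,k)$ by the corresponding term in the lemma. For $k=K$ I would argue exactly as in Lemma \ref{firstbound}: discard the constraint $s_1+\cdots+s_k\leq 1$, weaken $s_1+\cdots+s_{k-1}+(h-k+1)s_k\leq 3$ to $s_k\leq\frac{3-(k-1)\delta}{h-k+1}$ (so that every $s_i$ lies in $[\delta,\frac{3-(k-1)\delta}{h-k+1}]$), and use the symmetry of the integrand to obtain $I(h,\delta,K)\leq\frac{1}{K!}\left(\log\frac{3-(K-1)\delta}{(h-K+1)\delta}\right)^{K}$. For $[h/3]\leq k<K$ the only new feature is the constraint $s_1+\cdots+s_k\geq\frac{h-k-3}{h-k-1}$; here I would multiply the (everywhere positive) integrand by $\exp\left(\alpha_k\left(s_1+\cdots+s_k-\frac{h-k-3}{h-k-1}\right)\right)$, which is at least $1$ throughout $R(h,\delta,k)$, and then enlarge the domain to the box $[\delta,\frac{3-(k-1)\delta}{h-k+1}]^{k}$. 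Symmetry again supplies a factor $\frac{1}{k!}$ and the box integral factors across the coordinates, giving
$$I(h,\delta,k)\leq\frac{\exp\left(-\alpha_k\frac{h-k-3}{h-k-1}\right)}{k!}\left(\int_\delta^{\frac{3-(k-1)\delta}{h-k+1}}\exp(\alpha_k s)\,\frac{ds}{s}\right)^{k}.$$
Inserting these two bounds into the displayed inequality for $T(h,\delta)$ completes the proof.

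The step I expect to require the most care is the passage from the discrete sums over $\mathcal D_k$ to the integrals $I(h,\delta,k)$: one must check that the repeated partial summation genuinely leaves an error $o(1)$, that this absorbs both the non-squarefree contribution (bounded exactly as in the previous section) and the $\tfrac{\log 9}{\log X}$ slack, and that summing these errors over the $O_h(1)$ relevant $k$ keeps the total $o(1)$. The integral estimates themselves are then entirely routine; the only real freedom is in the parameters $\alpha_k$, which one selects numerically so as to minimise each individual summand.
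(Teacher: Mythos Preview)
Your proposal is correct and follows essentially the same route as the paper: invoke the construction of $\mathcal D=\bigcup_k\mathcal D_k$, reduce $T(h,\delta)$ to $X\bigl(\sum_k I(h,\delta,k)+o(1)\bigr)$ by the squarefree reduction and partial-summation-to-integral step of the previous section, and then bound each $I(h,\delta,k)$ by dropping $s_1+\cdots+s_k\le 1$, weakening to the box $[\delta,\tfrac{3-(k-1)\delta}{h-k+1}]^k$, and (for $k<K$) inserting the nonnegative factor $\exp\bigl(\alpha_k(s_1+\cdots+s_k-\tfrac{h-k-3}{h-k-1})\bigr)$ before using symmetry and separability. The points you flag as requiring care (the $o(1)$ from partial summation, the $\tfrac{\log 9}{\log X}$ slack, and the implied constant in (\ref{maximal})) are exactly the ones the paper handles in passing.
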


In practice we have found that when applying this lemma the largest contribution to the sum comes from $k=[h/3]$.  We therefore take $K$ sufficiently large so that the bound on $I(h,\delta,K)$ is smaller than this, little can be gained by taking a larger $K$.     

\section{Proof of Theorem \ref{mainthm}}

Recall that we need to estimate 
$$\sum_{h>H}\min(h,[1/\delta])2^hT(h,\delta)$$
when $\delta=\frac{1}{321}$.  This sum is not infinite as $T(h,\delta)=0$ for $h>963$.  We begin by using Lemma \ref{firstbound} to obtain 
$$\sum_{h\geq 190}\min(h,[1/\delta])2^hT(h,\delta)\leq X\left(9.2\times 10^{-10}+o(1)\right).$$
For smaller $h$ we use Lemma \ref{secondbound} to obtain 
$$\sum_{h=133}^{189}\min(h,[1/\delta])2^hT(h,\delta)\leq X\left(3.6\times 10^{-8}+o(1)\right).$$ 
Specifically, this is obtained using the value $K=[h/3]+20$ and optimising the choices of $\alpha_k$ in each instance.  We therefore choose $H=132$ and conclude that
$$\sum_{h>H}\min(h,[1/\delta])2^hT(h,\delta)\leq X\left(3.7\times 10^{-8}+o(1)\right).$$
We deduce from this that 
\begin{eqnarray*}
\#\{n+\cb\in\ca:W(n)>0\}&\geq& \frac{2^{-132}}{132}X\left(9.2\times 10^{-8}-3.7\times 10^{-8}+o(1)\right)\\
&\geq &X\left(7.7\times 10^{-50}+o(1)\right).\\
\end{eqnarray*}
We may therefore apply Lemma \ref{lem2} with $\delta=\frac{1}{321}$ and $\alpha=7.7\times 10^{-50}$ to deduce that Theorem \ref{mainthm} holds when 
$\varpi\leq \frac{\alpha\delta}{2}=1.2\times 10^{-52}$, so in particular it holds when $\varpi\leq 10^{-52}$.

\addcontentsline{toc}{section}{References} 
\bibliographystyle{plain}
\bibliography{../biblio}

\bigskip
\bigskip

Centre de recherches math\'ematiques,

Universit\'e de Montr\'eal,

Pavillon Andr\'e-Aisenstadt,

2920 Chemin de la tour, Room 5357,

Montr\'eal (Qu\'ebec) H3T 1J4

\bigskip 

{\tt alastair.j.irving@gmail.com}
 
\end{document}